\newcommand{\N}{\mathbb{N}}
\newcommand{\Z}{\mathbb{Z}}
\newcommand{\OreGen}{R[x;\sigma,\delta]}
\newcommand{\DiffPol}{R[x;\identity_R,\delta]}
\DeclareMathOperator{\identity}{id}
\def\imod#1{\allowbreak\mkern10mu({\operator@font mod}\,\,#1)}
\newtheorem{thm}{Theorem}[section]
\newtheorem{remark}[thm]{Remark}
\newtheorem{lemma}[thm]{Lemma}
\newtheorem{prop}[thm]{Proposition}
\newtheorem{cor}[thm]{Corollary}
\title{Centralizers in Ore extensions of polynomial rings}
\author{Johan Richter\footnote{johan.richter@mdh.se} and Sergei Silvestrov\footnote{sergei.silvestrov@mdh.se}}
\date{}
\begin{document}

\maketitle

\begin{abstract}
In this paper we consider centralizers of single elements in certain Ore extensions, with a non-invertible endomorphism, of the ring of polynomials in one variable over a field. We show that they are commutative and finitely generated as an algebra. We also show that for certain classes of elements their centralizer is singly generated as an algebra.  
\\[+2mm]
\textbf{MSC 2010}: 126S32, 16S36\\
\textbf{Keywords}: Ore extension, centralizer
\end{abstract}

\section{Introduction}

This article is concerned with centralizers of elements in Ore extensions of the form $K[y][x;\sigma,\delta]$, where $K$ is a field, $\sigma$ is an $K$-algebra endomorphism such that $\deg(\sigma(y))>1$ and $\delta$ is a $K$-linear $\sigma$-derivation. 

 We now remind the reader what an Ore extension is. An Ore extension of a ring $R$ is the additive group of polynomials $R[x]$, equipped with a new multiplication, such that $xr=\sigma(r)x+\delta(r)$ for all $r \in R$, for some functions $\sigma$ and $\delta$, on $R$.
This is well-defined if and only if $\sigma$ is an endomorphism and $\delta$ is an additive function such that 
$$\delta(ab)= \sigma(a)\delta(b)+\delta(a)b$$
for all $a$ and $b$ in $R$. We denote the Ore extension by $\OreGen$. The elements, $r \in R$, satisfying $\sigma(r)=r$ and $\delta(r)=0$ are called the \emph{constants} of the Ore extension. In our cases $R$ is an algebra over a field $K$ and we assume that $\sigma$ and $\delta$ are $K$-linear.
See e.g. \cite{GoodearlWarfield} for the definition and basic properties of Ore extensions. 

There is a series of results concerning centralizers in rings of the form $\DiffPol$ in the literature, that has inspired this article. The method of proof we use goes back to an article by Amitsur \cite{Amitsur}, where he proves the following theorem. 

\begin{thm}\label{thm_DiffField}
 Let $K$ be a field of characteristic zero with a derivation $\delta$. Let $F$ denote the subfield of constants. Form the differential operator ring $S=K[x; \identity,\delta]$, and let $P$ be an element of $S$ of 
degree $n>0$. Set $F[P]= \{ \sum_{j=0}^m b_j P^j \ | \ b_j \in F \ \}$, the ring of polynomials in $P$ with constant coefficients. 
Then the centralizer of $P$ is a commutative subring of
$S$ and a free $F[P]$-module of rank at most $n$.  
\end{thm}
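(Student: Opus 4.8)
The plan is to run the classical degree argument that goes back to Amitsur, with the leading coefficient doing all the work. Since $\sigma = \identity$ here, $S$ is a domain in which $\deg(AB) = \deg(A) + \deg(B)$ and the leading coefficient of a product is the product of the leading coefficients; moreover the associated graded ring for the degree filtration is the commutative polynomial ring $K[\bar{x}]$, because $xa - ax = \delta(a)$ has strictly smaller degree than $xa$. I will write $P = p_n x^n + \cdots$ and record at the outset that $\Cen(P)$ is a subring of $S$ containing $F[P]$, that $F[P]$ is a polynomial ring (the powers $P^j$ have distinct degrees $jn$, so $P$ is transcendental over $F$ and $F[P]$ is a PID), and that $\Cen(P) \cap K = F$, since for $a \in K$ the commutator $[P,a]$ has leading term $n p_n \delta(a) x^{n-1}$, which is nonzero unless $\delta(a) = 0$ (here characteristic zero guarantees $n p_n \neq 0$).

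The engine is a single computation. If $Q = a x^m + \cdots$ lies in $\Cen(P)$, then comparing the coefficient of $x^{n+m-1}$ on the two sides of $PQ = QP$ and using that $K$ is commutative yields $n p_n \delta(a) = m\, a\, \delta(p_n)$, equivalently $\delta(a)/a = (m/n)\,\delta(p_n)/p_n$. The consequence I need is that two nonzero elements of $\Cen(P)$ of the same degree $m$ have leading coefficients $a, a'$ with $\delta(a/a') = 0$, hence $a/a' \in F$; so given $Q_1, Q_2 \in \Cen(P)$ with $\deg Q_1 = \deg Q_2$ there is $\lambda \in F$ with $\deg(Q_1 - \lambda Q_2) < \deg Q_1$. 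This degree-reduction step is the heart of everything that follows.

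From here the module structure is bookkeeping. As $\Cen(P)$ is a ring inside the domain $S$, the set $\Gamma$ of degrees of its nonzero elements is a submonoid of $\N$ containing $n$; let $r_1 = 0, r_2, \dots, r_s$ be the residues modulo $n$ that occur, so $s \le n$, and for each $r_i$ pick $Q_i \in \Cen(P)$ of least degree $m_i$ in that class, with $Q_1 = 1$. Given any $Q \in \Cen(P)$ of degree $d$, its class is some $r_i$, so $d - m_i = tn$ with $t \ge 0$, and $P^t Q_i \in \Cen(P)$ again has degree $d$; the reduction step lets me subtract an $F$-multiple of $P^t Q_i$ to strictly lower the degree, and induction on $d$ gives $\Cen(P) = \sum_i F[P] Q_i$. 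Independence over $F[P]$ is immediate from degrees: a nonzero term $f_i(P) Q_i$ has degree $n\deg(f_i) + m_i \equiv r_i \imod{n}$, so distinct terms sit in distinct residue classes and cannot cancel; hence $\{Q_1, \dots, Q_s\}$ is a free basis and the rank is at most $n$.

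The genuinely delicate point, and the one I expect to be the main obstacle, is commutativity, which the degree bookkeeping does not by itself force. Here I would follow Amitsur and pass to the completion of $S$ consisting of formal pseudo-differential operators, the skew Laurent series ring in $x^{-1}$. After adjoining an $n$-th root of $p_n$ (an algebraic extension of $K$, to which $\delta$ extends uniquely in characteristic zero), $P$ acquires a formal $n$-th root $\xi = p_n^{1/n} x + \cdots$ of degree one, and a direct computation identifies the centralizer of $\xi$ in this larger ring as the formal Laurent series in $\xi$ with coefficients in the constants — in particular a commutative ring containing $P = \xi^n$. One then checks that this is exactly the centralizer of $P$ in the completion, so that $\Cen(P) \subseteq S$ embeds into a commutative ring and is itself commutative. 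The two things requiring care in this last step are the existence of the $n$-th root inside the completion and the precise identification of the centralizer of $\xi$; everything else reduces to the degree calculus set up above.
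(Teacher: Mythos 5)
Your proposal is correct, but it splits into two halves that relate differently to the paper. Note first that the paper does not actually prove Theorem \ref{thm_DiffField}: it quotes it from Amitsur as the model whose method is then redeployed for $K[y][x;\sigma,\delta]$ in Theorems \ref{thm_Centralizer} and \ref{thm_cenRcom}, so the fair comparison is with those proofs. Your free-module argument is essentially identical to the paper's (and Amitsur's): the leading-coefficient identity $n p_n \delta(a) = m\, a\, \delta(p_n)$ is the $\sigma=\identity$ analogue of Lemma \ref{onedim}, giving the one-dimensionality of leading coefficients in each degree, and the residue-classes-mod-$n$ bookkeeping with minimal-degree representatives is exactly the proof of Theorem \ref{thm_Centralizer}. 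Your commutativity argument, however, is a genuinely different route, and incidentally misattributed: passing to the formal pseudo-differential completion, adjoining $p_n^{1/n}$, extracting an $n$-th root $\xi$ of $P$, and identifying $C(P)$ inside the commutative ring of Laurent series in $\xi$ with constant coefficients is Schur's method (also used by Goodearl), not Amitsur's. Amitsur's proof---which the paper follows in Theorem \ref{thm_cenRcom}---never leaves $S$: it picks $Q$ whose degree generates the cyclic group of degrees modulo $n$, shows by finite-dimensionality of the low-degree part that every $V \in C_S(P)$ satisfies $V f(P) = H(P,Q)$ for some nonzero $f \in F[P]$ and polynomial $H$, and cancels in the domain. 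The trade-off is real: your completion argument yields a sharper structural picture (the centralizer embeds in $F'((\xi^{-1}))$, essentially the Burchnall--Chaundy/Schur theorem), but it leans hard on characteristic zero and on $\sigma = \identity$, since the recursive construction of $\xi$ requires dividing by integers and the completion in $x^{-1}$ requires $x$ to be "invertible-friendly"; Amitsur's internal argument is what survives in the paper's setting, where $\sigma$ is a non-invertible endomorphism with $\deg\sigma(y)>1$ and no such completion or root is available. Your sketch of the final step is where all the remaining work sits (existence of $\xi$ and the degree-by-degree identification of the centralizer in the completion), but both are standard and you flagged them correctly.
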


Generalizations of this result can be found in an article by Goodearl and Carlson \cite{GoodearlCarlson} and in an article by  Goodearl alone \cite{GoodearlPseudo}. Both articles deal with the case that $\sigma=\identity$, however. Makar-Limanov, in \cite{Makar-Limanov}, studies centralizer in the quantum plane, ie the ring $K[y][x;\sigma, 0]$, with $\sigma(y)=qy$. The results in \cite{Makar-Limanov} also follow from results in \cite{BellSmall}. This article, by Bell and Small, describes centralizers of elements in domains of Gelfand-Kirillov dimension $2$. Some of our results are similar to theirs but are logically independent, since the algebras in this article have infinite Gelfand-Kirillov dimension. 

The paper that comes closest in approach to our paper, that we have been able to find, is an unpublished preprint by Tang \cite{Xin}. Tang also studies Ore extensions over $K[y]$, but with $\sigma$ an automorphism. Like us, Tang describes the structure of maximal commutative subalgebras of the algebras he studies. He cites \cite{Arnal, BavulaWeyl, Dixmier} by Arnal and Pinczon, Bavula respectively Dixmier, as previous articles obtaining similar results on maximal commutative subalgebras. The article by Dixmier contains many results, including similar descriptions of centralizers to the one we give, but it deals exclusively with the Weyl algebra. Bavula's article studies Generalized Weyl algebras and obtains many results, a few of which have analogues in this article. The class of Generalized Weyl algebras does not include our class of Ore extension  however. We have not had access to Arnal's and Pinczon's article, but it appears to deal with a completely different class of algebras from those we study.  

In \cite{ergodipotent}, Hellstr\"om and the second author generalizes Amitsur's method of proof. Among other results, they show that Amitsur's argument works in a large class of graded algebras, provided a condition on the dimension of certain subsets of centralizers is met. We have not found a way to apply their results to the algebras in this article, however. 

This article is a continuation of the article \cite{RichterAGMP}, by the first author. Theorem \ref{thm_Centralizer} can be found in that paper. The arrangement of the proof is somewhat different however. Theorem \ref{thm_Centralizer} complements our other results that describe the centralizer.

In the next section we will introduce some notation and lemmas that we will use throughout this article. In the third section we prove that the centralizer of a non-constant element, $P$, is a free module of finite rank over the ring of polynomials in $P$ with constant coefficients (Theorem \ref{thm_Centralizer}). In the fourth section we prove that centralizers of non-constant elements are commutative (Theorem \ref{thm_cenRcom}) and describe centralizers of any set (Proposition \ref{prop_centralizers}). In the fifth section we try to determine when centralizers are isomorphic to the ring of polynomials in one variable. We manage to prove that this is true in many cases (Propositions \ref{prop_degPrime} and \ref{prop_yLeqx}), with the sufficient conditions given depending only on the leading coefficient. In Propositions \ref{prop_SpecialSigma} and \ref{MonomCentralizer} we restrict the class of Ore extension we are considering and obtain results showing that centralizers of certain elements are isomorphic to the ring of polynomials in one variable.

\section{Preliminaries}

We will adopt the following standing conventions and notations in this article. $K$ is a field and $R=K[y]$ is the polynomial ring in one variable over that field. By $\sigma$ we denote an $K$-algebra endomorphism of $R$ such that $\deg_y(\sigma(y))>1$. By $\delta$ we denote a $\sigma$-derivation on $R$, i.e. a $K$-linear and additive function $R \to R$ such that 
$$\delta(ab)= \sigma(a)\delta(b)+\delta(a)b,$$
 for all $a$ and $b$ in $R$. Our object of study will be the \emph{Ore extension} $S=\OreGen$.  We note that the constants are precisely the elements of $K$. 

We define the notion of the degree of an element in $S$ w.r.t. $x$ in the obvious way. We set $\deg(0):= -\infty$. As for the ordinary degree it is true that $\deg(ab) = \deg(a) +\deg(b)$.  It is important not to confuse this degree function with the degree of an element of $R$ as a polynomial in $y$.  We will always mean degree w.r.t. $x$ when we write degree, unless we explicitly indicate otherwise.

If $A$ is a subset of a ring $B$, then by $C_B(A)$ we denote the \emph{centralizer} of $A$, the set of all elements in $B$ that commute with every element in $A$. If $a$ is a single element we write $C_B(a)$ instead of $C_B(\{a\})$.  

We start with two lemmas that will be important in what follows.

\begin{lemma}\label{onedim}
 Suppose that $P$ is any element in $S$, and that $Q \in C_S(P)$ has degree $m$. Let $q_m$ be the leading coefficient of $Q$. Then 
\begin{equation}\label{eq_highCoeff} 
 p_n \sigma^n(q_m) = q_m \sigma^m(p_n).
\end{equation}
 The solution space of this equation (as an equation for $q_m$) is at most one-dimensional as a $K$-sub vector space of $K[y]$. 
\end{lemma}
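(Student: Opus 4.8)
The plan is to treat the two assertions separately, the dimension bound being the real content. To obtain \eqref{eq_highCoeff} I would write $P=\sum_{i=0}^n p_i x^i$ and $Q=\sum_{j=0}^m q_j x^j$ and compare the coefficients of $x^{n+m}$ in $PQ$ and $QP$. From $xr=\sigma(r)x+\delta(r)$ an easy induction gives $x^k r=\sigma^k(r)x^k+(\text{terms of degree}<k\text{ in }x)$, so the top term of $p_n x^n\cdot q_m x^m$ is $p_n\sigma^n(q_m)x^{n+m}$ and the top term of $q_m x^m\cdot p_n x^n$ is $q_m\sigma^m(p_n)x^{n+m}$. Since $Q$ commutes with $P$ these coincide, which is \eqref{eq_highCoeff}. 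This part is pure bookkeeping.

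For the dimension bound the decisive point is that $\sigma$ scales $y$-degrees. Put $s:=\deg_y(\sigma(y))>1$. As $\sigma$ is a $K$-algebra endomorphism of $K[y]$ we have $\sigma(f)=f(\sigma(y))$, so $\deg_y(\sigma(f))=s\,\deg_y(f)$ for nonzero $f$, and inductively $\deg_y(\sigma^k(f))=s^k\deg_y(f)$. The map $q\mapsto p_n\sigma^n(q)-q\,\sigma^m(p_n)$ is $K$-linear, so the set $V$ of solutions of \eqref{eq_highCoeff} is a $K$-subspace of $K[y]$. I would then pin down the degree of any nonzero solution: if $q\in V$ is nonzero of $y$-degree $d$, comparing $y$-degrees on the two sides of \eqref{eq_highCoeff} (both coefficients $p_n$ and $\sigma^m(p_n)$ being nonzero) yields $\deg_y(p_n)+s^n d=d+s^m\deg_y(p_n)$, hence $d(s^n-1)=\deg_y(p_n)(s^m-1)$. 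Since $s>1$ gives $s^n-1\neq 0$, the value $d=\deg_y(p_n)\,(s^m-1)/(s^n-1)$ is forced, independent of $q$ (and if this number is not a nonnegative integer, then $V=\{0\}$, which is fine).

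The bound now follows by a leading-term cancellation. Suppose $q,q'\in V$ are both nonzero; by the above they share the same $y$-degree $d$, so some scalar $\lambda\in K$ makes $q'-\lambda q$ have $y$-degree strictly less than $d$. As $V$ is a subspace, $q'-\lambda q\in V$; but every nonzero element of $V$ has degree exactly $d$, forcing $q'-\lambda q=0$. Hence any two nonzero solutions are proportional and $\dim_K V\le 1$.

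The one step needing care, and the reason the hypothesis $\deg_y(\sigma(y))>1$ is indispensable, is the degree computation: it is precisely the strict inequality $s>1$ that makes $s^n-1$ nonzero and thereby rigidly determines $d$. If $s=1$, as in the differential-operator case $\sigma=\identity$, the degree equation degenerates to $0=0$ and the solution space can be larger, consistent with the rank-$n$ phenomenon in Theorem \ref{thm_DiffField}. I do not anticipate any other obstacle.
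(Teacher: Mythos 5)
Your proposal is correct and takes essentially the same route as the paper's own proof: equate the coefficients of $x^{n+m}$ in $PQ$ and $QP$ to get \eqref{eq_highCoeff}, use $\deg_y(\sigma^k(f))=s^k\deg_y(f)$ to derive the degree equation $\rho+s^nd=d+s^m\rho$ forcing the $y$-degree of any nonzero solution, and then cancel leading terms to conclude any two solutions are proportional. The only difference is that you spell out the bookkeeping (and the role of $s>1$) that the paper leaves implicit.
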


\begin{proof}

The equation follows by equating the highest order coefficients in $PQ$ and $QP$. To show that the solution space is one-dimensional we begin by noting that if $\rho=\deg_y(p_n)$, $s=\deg_y(\sigma(y))$ and $k=\deg_y(q_m)$ then 
\begin{equation*}
\rho + s^n k = k+ s^m \rho.
\end{equation*}
Thus $k$ is determined uniquely. Now suppose that $a,b$ are two solutions of Equation (\ref{eq_highCoeff}). Then we can find $\alpha \in K$, such that $\deg_y(a-\alpha b) < k$. But since $a-\alpha b$ is another solution of \eqref{eq_highCoeff} it follows that $a=\alpha b$.

\end{proof}

\begin{lemma}
 For any $P \in S$ of degree larger than $0$ it is true that 
\begin{displaymath}
 C_S(P) \cap R = K.
\end{displaymath} 
 \end{lemma}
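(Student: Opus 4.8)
The plan is to establish the nontrivial inclusion $C_S(P) \cap R \subseteq K$, since the reverse inclusion $K \subseteq C_S(P) \cap R$ is immediate: the elements of $K$ are precisely the constants of $S$ and so commute with everything. So let $r$ be a nonzero element of $R$ lying in $C_S(P)$, and let me aim to conclude that $r \in K$ (the case $r=0$ being trivial). Write $P = \sum_{i=0}^n p_i x^i$ with $p_n \neq 0$ and $n = \deg(P) > 0$.

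First I would invoke Lemma \ref{onedim} with $Q = r$. Viewed as an element of $S$, the polynomial $r$ has degree $m = 0$ in $x$ and is its own leading coefficient, $q_0 = r$. Since $r \in C_S(P)$, Equation \eqref{eq_highCoeff} applies and reads $p_n \sigma^n(r) = r\,\sigma^0(p_n) = r p_n$. Because $R = K[y]$ is a commutative domain and $p_n \neq 0$, I may cancel $p_n$ to reduce the whole problem to the single relation $\sigma^n(r) = r$.

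The remaining step is a degree count in $y$, which is where the standing hypothesis $\deg_y(\sigma(y)) > 1$ enters decisively. Put $s = \deg_y(\sigma(y))$. Since $\sigma$ is a $K$-algebra endomorphism of $K[y]$ it is determined by $\sigma(y)$, and for any nonzero $f \in R$ one has $\deg_y(\sigma(f)) = s\,\deg_y(f)$ (the leading coefficient does not vanish, as $K$ is a field). Iterating gives $\deg_y(\sigma^n(r)) = s^n \deg_y(r)$, so the relation $\sigma^n(r) = r$ forces $(s^n - 1)\deg_y(r) = 0$. As $s > 1$ and $n > 0$ give $s^n - 1 > 0$, I conclude $\deg_y(r) = 0$, i.e.\ $r \in K$, as desired.

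Honestly this argument is short, and there is no serious obstacle once Lemma \ref{onedim} is in hand; the only genuine input beyond it is the multiplicative growth of the $y$-degree under $\sigma$. The one point I would be careful to articulate is that the hypothesis $\deg_y(\sigma(y)) > 1$ — as opposed to the classical degree-one (automorphism-like) case — is exactly what makes $\sigma^n$ \emph{strictly} increase the $y$-degree of every nonconstant polynomial, thereby excluding nonconstant fixed points. I would also note that passing to the top-coefficient relation \eqref{eq_highCoeff} loses nothing here, precisely because a degree-zero element of $S$ coincides with its own leading coefficient.
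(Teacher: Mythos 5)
Correct, and essentially the paper's approach: the paper's entire proof reads ``This follows from Lemma \ref{onedim}'', and your argument is a valid way of filling in that one-liner, applying equation \eqref{eq_highCoeff} with $m=0$ and exploiting the multiplicative degree growth $\deg_y(\sigma(f)) = s\,\deg_y(f)$ with $s>1$ --- the same computation that drives the proof of Lemma \ref{onedim} itself. The only immaterial difference is that you cancel $p_n$ and redo the degree count by hand, whereas one could instead quote the at-most-one-dimensionality of the solution space of \eqref{eq_highCoeff}, which contains $K$ and hence equals $K$; both are equivalent uses of the same lemma.
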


\begin{proof}
This follows from Lemma \ref{onedim}.
\end{proof}

\section{Centralizers are free $K[P]$-modules}

\begin{thm}\label{thm_Centralizer}
 Let $P$ be any element of $S$ that is not constant. Then $C_S(P)$ is a free $K[P]$-module of rank at most $n:=\deg(P)$. 
     
\end{thm}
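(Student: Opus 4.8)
The plan is to follow Amitsur's method, organizing $C_S(P)$ according to the residues modulo $n$ of the $x$-degrees of its elements. First I would record the structural facts I will rely on. Since $\sigma$ is a $K$-algebra endomorphism of $K[y]$ sending $y$ to a polynomial of degree $>1$, it is injective, and as $R=K[y]$ is a domain this makes $S$ a domain in which $\deg(ab)=\deg(a)+\deg(b)$ (this additivity is already asserted in the preliminaries). In particular $\deg(P^k)=kn$, so the powers of $P$ are $K$-linearly independent; hence $K[P]$ is a genuine polynomial ring and ``free $K[P]$-module'' is meaningful. Clearly $K[P]\subseteq C_S(P)$.

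Next I would choose the candidate basis. The set of degrees of nonzero elements of $C_S(P)$ is closed under addition, since $C_S(P)$ is a subring of the domain $S$, and every such degree is $\ge 0$. For each residue $i\in\{0,1,\dots,n-1\}$ that is realized by some nonzero element of $C_S(P)$, let $d_i$ be the least degree with $d_i\equiv i\pmod{n}$ that occurs, and pick $Q_i\in C_S(P)$ with $\deg(Q_i)=d_i$; for $i=0$ I take $Q_0=1$, noting that the degree-$0$ part of $C_S(P)$ is exactly $K$ by the second lemma. Since there are at most $n$ residues, there are at most $n$ such $Q_i$, so proving $\{Q_i\}$ is a $K[P]$-basis will give rank at most $n$.

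For generation I would induct on degree. Given a nonzero $Q\in C_S(P)$ with $\deg(Q)=m$, let $i\equiv m\pmod{n}$ and write $m=d_i+jn$ with $j\ge 0$, which is possible because $d_i$ is minimal in its residue class. Then $P^jQ_i\in C_S(P)$ also has degree $m$, and by Lemma \ref{onedim} the leading coefficients of all degree-$m$ elements of $C_S(P)$ lie in an at-most one-dimensional $K$-space; hence there is $\lambda\in K$ with $\deg(Q-\lambda P^jQ_i)<m$. The difference is again in $C_S(P)$, so by the inductive hypothesis it is a $K[P]$-combination of the $Q_i$, and therefore so is $Q$. For freeness I would exploit that the degrees separate the generators: if $\sum_i f_i(P)Q_i=0$ with the $f_i\in K[P]$ not all zero, then each surviving term has degree $n\deg_P(f_i)+d_i\equiv i\pmod{n}$, so the nonzero terms have degrees in distinct residue classes and hence pairwise distinct degrees. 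The term of largest degree cannot be cancelled, contradicting that the sum vanishes; thus the $Q_i$ are $K[P]$-independent.

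The one step I expect to carry the weight is the degree-reduction in the generation argument, and this is precisely where Lemma \ref{onedim} is indispensable: without the one-dimensionality of the leading-coefficient equation one could not guarantee that $Q$ and $P^jQ_i$ have proportional leading coefficients, and the induction would break down. Everything else is bookkeeping with the additive degree function, which is clean because $S$ is a domain.
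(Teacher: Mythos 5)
Your proof is correct and follows essentially the same route as the paper's: the same decomposition of $C_S(P)$ by residues of degrees modulo $n$, the same choice of minimal-degree representatives with $Q_0=1$, the same induction using Lemma \ref{onedim} for the degree-reduction step, and the same residue-separation argument for $K[P]$-independence. The only differences are cosmetic — you spell out explicitly that $S$ is a domain, that $K[P]$ is a genuine polynomial ring, and that the top-degree term in a vanishing combination cannot cancel, points the paper leaves implicit.
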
    

The proof we give is similar to one in \cite{Amitsur}. As noted above, the theorem can also be found in \cite{RichterAGMP}.

\begin{proof} 

Denote by $M$ the subset of elements of $\{0, 1, \ldots, n-1 \}$ such that an integer $0 \leq i < n$ is in $M$ if and only if $C_S(P)$ contains an element of degree equivalent to $i$ modulo $n$. For $i \in M$ let $p_i$ be an element in $C_S(P)$ such that 
$\deg(p_i) \equiv i   \imod n$ and $p_i$ has minimal degree for this property. Take $p_0=1$. 

We will show that $\{ p_i | i \in M \}$ is a basis for $C_S(P)$ as a $K[P]$-module.

We start by showing that the $p_i$ are linearly independent over $K[P]$. Suppose $\sum_{i \in M} f_i p_i =0$ for some $f_i \in K[P]$. If $f_i \neq 0$, for a particular $i$, then $\deg(f_i)$ is divisible by $n$, in which case 
\begin{align}
 \deg(f_i p_i)= \deg(f_i)+\deg(p_i) \equiv \deg(p_i) \equiv i  \imod n. 
\end{align}
If $\sum_{i \in M} f_i p_i=0$ but not all $f_i$ are zero, we must have two nonzero terms, $f_i p_i$ and $f_j p_j$, that have the same degree despite $i,j \in M$ being distinct. But this is impossible since $ i \not\equiv j 
\imod n$. 

We now proceed to show that the $p_i$ span $C_S(P)$. Let $W$ denote the submodule they do span. We see that $W$ contains all elements of degree $0$ in $C_S(P)$. 

Now assume that $W$ contains all elements in $C_S(P)$ of degree less than $j$. Let $Q$ be an element in $C_S(P)$ of degree $j$. There is some $i$ in $M$ such that $j \equiv i \imod n$. Let $m$ be the degree of $p_i$. By the 
choice of $p_i$ we now that $m \equiv j \imod n$ and $m \leq j$. Thus $j= m+qn$ for some non-negative integer $q$. The element $P^q p_i$ lies in $W$ and has degree $j$. By Lemma \ref{onedim} the leading coefficient of $Q$ equals 
the leading coefficient of $P^q p_i$ times some constant $\alpha$. The element $Q-\alpha P^q p_i$  then lies in $C_S(P)$ and has degree less than $j$. By the induction hypothesis it also lies in $W$, and hence so does $Q$. 
\end{proof}

\section{Centralizers are commutative}

We now prove that the centralizer of any non-constant element of $S$ is commutative. For the proof of this we once again follow closely the presentation in \cite{Amitsur}. 

\begin{thm}\label{thm_cenRcom}
 Let $P$ be an element of $S$ that is not a constant. Then $C_S(P)$ is commutative.
\end{thm}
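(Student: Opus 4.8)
The plan is to follow Amitsur's scheme, using commutativity of leading symbols as the engine and an induction on degree to finish. First I would reduce the problem. Constants lie in $K$ and are central, so it suffices to prove $[A,B]:=AB-BA=0$ for $A,B\in C_S(P)$. Since $P$ is central in $C_S(P)$ and, by Theorem \ref{thm_Centralizer}, $C_S(P)$ is a free $K[P]$-module with a basis $\{p_i\mid i\in M\}$, I can write $A=\sum_i f_ip_i$ and $B=\sum_j g_jp_j$ with $f_i,g_j\in K[P]$. Because the coefficients $f_i,g_j$ are central, $[A,B]=\sum_{i,j}f_ig_j[p_i,p_j]$, so the whole statement reduces to proving $[p_i,p_j]=0$ for the finitely many basis elements.

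The computational core is a leading-coefficient identity. If $A,B\in C_S(P)$ have degrees $a,b$ and leading coefficients $\alpha,\beta$, I would show $\alpha\sigma^a(\beta)=\beta\sigma^b(\alpha)$. Both sides are readily checked to satisfy Equation \eqref{eq_highCoeff} for degree $a+b$, using that $\alpha$ and $\beta$ satisfy it for degrees $a$ and $b$; hence by Lemma \ref{onedim} they are proportional over $K$. Comparing their leading coefficients as polynomials in $y$, whose $y$-degrees are forced by the relation appearing in the proof of Lemma \ref{onedim}, pins the proportionality constant to $1$. This identity says exactly that the leading terms of $AB$ and $BA$ coincide, so $\deg[A,B]<a+b$; equivalently, the associated graded ring of $C_S(P)$ for the degree filtration is commutative.

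Finally I would run an induction on $\deg p_i+\deg p_j$ to upgrade this degree drop into the actual vanishing of $[p_i,p_j]$. The main obstacle is precisely this last step: a drop in the degree of a commutator does not by itself force the commutator to vanish, as the Weyl algebra shows that commuting symbols are compatible with a noncommutative ring. One must therefore use that $C_S(P)$ is genuinely the centralizer of $P$ and module-finite over $K[P]$, and not merely that its symbols commute. Taking a non-commuting basis pair of minimal total degree, the element $[p_i,p_j]$ lies in $C_S(P)$ with strictly smaller degree; expanding it in the basis and feeding the pieces back through the leading-coefficient identity and the induction hypothesis should force it to be zero, contradicting minimality. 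Controlling the degrees that occur in this expansion, so that the induction actually closes, is where the argument requires the most care.
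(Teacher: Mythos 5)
Your first two steps are essentially correct. The reduction to basis pairs is legitimate because $K[P]$ is central in $C_S(P)$ (though note it presupposes $\deg(P)\geq 1$; the case $P\in R\setminus K$, where $C_S(P)=R$ is commutative, should be disposed of separately, as the paper does). The leading-coefficient identity $\alpha\sigma^a(\beta)=\beta\sigma^b(\alpha)$ is also true: both sides are leading coefficients of elements of $C_S(P)$ of degree $a+b$, hence proportional over $K$ by Lemma \ref{onedim}, and writing $\rho=\deg_y(p_n)$, the relations $\deg_y(\alpha)=\rho(s^a-1)/(s^n-1)$ and $\deg_y(\beta)=\rho(s^b-1)/(s^n-1)$, together with a short computation involving the leading $y$-coefficients of $\sigma^a(y)$ and $\sigma^b(y)$, pin the constant to $1$. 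So you have correctly shown that commutators in $C_S(P)$ strictly drop degree.

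The third step, however, is a genuine gap, and it is not a technical loose end but the entire difficulty of the theorem. Your induction has no mechanism to close: if $(p_i,p_j)$ is a non-commuting basis pair of minimal total degree, then $[p_i,p_j]$ is a nonzero element of $C_S(P)$ of strictly smaller degree, expandable as $\sum_k g_k(P)p_k$; but the induction hypothesis speaks only about commutators of \emph{other} basis pairs of smaller total degree and imposes no constraint whatsoever on the coefficients $g_k$, so no contradiction materializes. Indeed, the ingredients your closing step invokes --- domain, finite free module over a central polynomial subring, commutative associated graded ring, closure under commutators --- are jointly insufficient to force commutativity: the Weyl algebra over a field of characteristic $p$ is a domain, is a free module of rank $p^2$ over its center $K[x^p,y^p]$, and has commutative associated graded ring for the total-degree filtration, yet $[x,y]=1\neq 0$. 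Any correct proof must use finer structure, and the paper does so by a different route that avoids commutator-vanishing arguments altogether. It observes that the degrees of elements of $C_S(P)$ reduce modulo $n$ to a subgroup of $\Z_n$, hence a cyclic one; it chooses a single $Q\in C_S(P)$ whose degree generates this group, so that repeated reduction via Lemma \ref{onedim} writes every $U\in C_S(P)$ as $H(P,Q)+U_0$ with $\deg(U_0)<t$ for a fixed $t$; since the elements of degree less than $t$ in $C_S(P)$ form a $K$-space of dimension at most $t$ (Lemma \ref{onedim} again), applying this to $VP^0,\dots,VP^t$ and extracting a linear dependence among the remainders yields, for every $V\in C_S(P)$, a nonzero $f\in K[P]$ with $Vf(P)\in K[P][Q]$. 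Commutativity then follows because $K[P][Q]$ is commutative and $S$ is a domain, so $f_1(P)f_2(P)$ cancels from $V_1V_2f_1(P)f_2(P)=V_2V_1f_1(P)f_2(P)$. This step --- clearing denominators into a commutative subalgebra generated by $P$ and one well-chosen $Q$ --- is what your outline is missing, and without it (or a genuine substitute) the degree-drop property cannot be upgraded to commutativity.
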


\begin{proof}
 If $P$ is an element of $R \setminus K$ it follows that $C_S(P)=R$ which is commutative. Thus suppose that $n=\deg(P) \geq 1$. Let $D$ be the set of degrees of non-zero elements of $C_S(P)$. Since $C_S(P)$ is a subring, and 
$\deg(ab) =\deg(a)+\deg(b)$ for any non-zero $a,b$, it follows that $D$ is closed under addition. Map $D$ into $\Z_n$ in the natural way and denote the image by $D_n$. Since $D_n$ is finite, closed under addition and contains $0$ it 
is a subgroup of $Z_n$. So it is a cyclic group. 

Let $Q \in C_S(P)$ be an element such that $ \deg(Q) \mod n$ generates $D_n$. Let $J$ be the set of elements of the form
\begin{displaymath}
 H(P,Q) = \phi_0 + \phi_1 Q + \ldots \phi_l Q^l, \ \phi_i \in K[P], \ i=0\ldots l
\end{displaymath}
and let $E = \{ \deg(H(P,Q)) \ | \ H(P,Q) \in J \}$.  
Suppose that $t \in \N$ is such that if $m\geq t$ and $m \in D$ then $m \in E$. Such a $t$ must clearly exist. Suppose now that  $U$ is any element of $C_S(P)$. If $\deg(U) \geq t$, then, by Lemma \ref{onedim}, there is a $H_1(P,Q) \in J$
such that $\deg(U-H_1) < \deg(U)$. By repeating this process if necessary, we find that we can write $U =H(P,Q) + U_0$ where $\deg(U_0) < t$.  We note that the set of elements in $C_S(P)$ of degree less than $t$ form a 
finite-dimensional vector space over $K$ of dimension at most $t$. 

If $V$ is an element of $C_S(P)$ we can write $VP^i = H_i(P,Q)+ V_i$, where $\deg(V_i) < t$, for $i=0,1,\ldots t$. Then the $V_i$ are linearly dependent so there are $c_i \in K$ such that 
$\sum_{i=0}^t c_i V_i =0$ which implies that 
\begin{displaymath}
V \sum_{i=0}^t c_i P^i = \sum_{i=0}^t c_i H_i.  
\end{displaymath}
So for any $V \in C_S(P)$, there are non-zero $f \in K[P]$ and $H(P,Q) \in J$ such that $V f(P) = H(P,Q)$. The elements in $J$ commute with each other and the elements of $K[P]$ commutes with everything in $C_S(P)$. Thus if
$V_1, V_2$ are two elements in $C_S(P)$, with $V_i f_i(P) = H_i(P,Q)$, we get that 
\begin{multline}
 V_1 V_2 f_1(P) f_2(P) = V_1 f_1(P) V_2 f_2(P) = H_1(P,Q) H_2(P,Q) = \\
=  H_2(P,Q) H_1(P,Q) = V_2 f_2(P) V_1 f_1(P) = V_2 V_1 f_1(P) f_2(P).
\end{multline}

Since $S$ is a domain this implies that $V_1 V_2 =V_2 V_1$.

\end{proof}

It is clear that if $A$ is any set containing a non-constant element then $C_S(A)$ is commutative as well. But we can say more than that, as the next proposition illustrates.

\begin{prop}\label{prop_centralizers}

Let $A$ be any subset of $S$. Then $C_S(A)$ equals either $S$, $K$ or $C_S(P)$, where $P$ is a non-constant element in $S$.
\end{prop}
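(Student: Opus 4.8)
The plan is to split into cases according to whether $A$ contains a non-constant element, and then to exploit the commutativity of centralizers established in Theorem \ref{thm_cenRcom}. The whole argument turns on the observation that once $A$ has a non-constant member, $C_S(A)$ lands inside a commutative ring, so commutativity can be used to move elements freely between $A$ and any prospective generator.

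First I would dispose of the trivial case: if $A \subseteq K$, then since the elements of $K$ are precisely the constants and hence central in $S$, every element of $S$ commutes with every element of $A$, so $C_S(A) = S$, which is the first alternative. Second, suppose $A \not\subseteq K$, so that $A$ contains some non-constant element $a_0$. Then $C_S(A) \subseteq C_S(a_0)$, and by Theorem \ref{thm_cenRcom} the ring $C_S(a_0)$ is commutative; hence $C_S(A)$, as a subring of a commutative ring, is itself commutative. Since $K \subseteq C_S(A)$ always holds, either $C_S(A) = K$, giving the second alternative and finishing, or else $C_S(A) \supsetneq K$ and therefore contains a non-constant element $P$.

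In this remaining situation the claim is that $C_S(A) = C_S(P)$, and I would establish both inclusions. For $C_S(A) \subseteq C_S(P)$: any $w \in C_S(A)$ lies in the commutative ring $C_S(A)$ together with $P$, so $wP = Pw$ and thus $w \in C_S(P)$. For the reverse inclusion $C_S(P) \subseteq C_S(A)$: since $P \in C_S(A)$, the element $P$ commutes with every $a \in A$, which means $A \subseteq C_S(P)$; now given any $z \in C_S(P)$ and any $a \in A$, both $z$ and $a$ lie in $C_S(P)$, and this ring is commutative by Theorem \ref{thm_cenRcom} because $P$ is non-constant, so $za = az$, whence $z$ commutes with all of $A$ and $z \in C_S(A)$. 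The two inclusions give $C_S(A) = C_S(P)$, the third alternative.

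The argument is largely bookkeeping once commutativity is in hand, so I do not anticipate a serious obstacle; the points that require care are purely organizational. One must verify that in the final case $C_S(A)$ genuinely contains a non-constant element, which follows because $K \subseteq C_S(A)$ forces $C_S(A) \neq K$ to produce such a $P$, and one must apply the commutativity hypothesis of Theorem \ref{thm_cenRcom} correctly in two distinct places, namely to $C_S(a_0)$ to obtain commutativity of $C_S(A)$, and to $C_S(P)$ to obtain the reverse inclusion. The empty-set and all-constant cases are subsumed under the first alternative, so no separate treatment is needed.
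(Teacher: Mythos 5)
Your proof is correct, but it organizes the case analysis genuinely differently from the paper. The paper splits according to the internal structure of $A$: either $A$ contains two non-commuting elements (in which case it shows $C_S(A)=K$ by applying Theorem \ref{thm_cenRcom} to the centralizer of a hypothetical non-constant $U \in C_S(P)\cap C_S(Q)$), or $A$ contains a non-constant $P$ commuting with everything in $A$ (in which case $C_S(A)=C_S(P)$ with $P \in A$), or $A$ consists only of constants. You instead split according to the size of $C_S(A)$: after the trivial case $A\subseteq K$, you use commutativity of $C_S(a_0)$ to conclude $C_S(A)$ is commutative, and then either $C_S(A)=K$ outright, or you pick a non-constant $P\in C_S(A)$ and prove $C_S(A)=C_S(P)$ by two inclusions. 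Both arguments lean on Theorem \ref{thm_cenRcom} twice, and the reverse inclusion (from $A \subseteq C_S(P)$ and commutativity of $C_S(P)$ deduce $C_S(P)\subseteq C_S(A)$) is the same in both; the differences are that your witness $P$ lives in $C_S(A)$ rather than in $A$, that your forward inclusion needs the extra observation that $C_S(A)$ is commutative (trivial in the paper, since there $P\in A$), and that you never need the paper's separate argument for non-commuting pairs --- that case is silently absorbed into the alternative $C_S(A)=K$. What the paper's version buys is sharper information about when each alternative occurs in terms of $A$ itself: two non-commuting elements in $A$ force $C_S(A)=K$, and otherwise $P$ may be chosen inside $A$. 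What your version buys is economy --- one fewer ad hoc case --- plus the useful by-product that $C_S(A)=C_S(P)$ for \emph{every} non-constant $P \in C_S(A)$, i.e.\ the alternative $C_S(A)=K$ occurs exactly when $C_S(A)$ contains no non-constant element.
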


\begin{proof}
Suppose $A$ contains two elements $P$ and $Q$ (necessarily non-constant), that do not commute with each other. Then 
$$C_S(A) \subseteq C_S(P) \cap C_S(Q).$$

But if $U$ is some non-constant element in $C_S(P) \cap C_S(Q)$, then $P,Q \in C_S(U)$ and by Theorem \ref{thm_cenRcom} it 
would follow that $P$ and $Q$ commute. Thus $C_S(A) =K$. 

Now suppose $A$ contains a non-constant $P$ and everything in $A$ commutes with $P$. Clearly $C_S(A) \subseteq C_S(P)$. But, since $C_S(P)$ is commutative and $A \subseteq C_S(P)$, every element in $C_S(P)$ commutes with every element in $A$. Thus $C_S(P) = C_S(A)$. 

If, finally, $A$ contains only constants, then $C_S(A)=S$.  
\end{proof}

\begin{remark}
We note that the maximal commutative subrings of $S$ are $K$ and the sets of the form $C_S(P)$, for nonconstant $P$. 

\end{remark}

\section{Singly generated centralizers}

We note that we can give a bound on the number of generators needed to generate a centralizer as an algebra. 

\begin{cor}\label{cor_FinGen}
Let $P \in \OreGen$ satisfy $n= \deg(P) >0$. Then we can find $n$ elements that generate $C_S(P)$ as a $K$-algebra.
\end{cor}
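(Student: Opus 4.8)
The plan is to deduce this directly from Theorem \ref{thm_Centralizer}, since the essential work has already been done there. That theorem furnishes a set $\{p_i \mid i \in M\}$, with $M \subseteq \{0,1,\ldots,n-1\}$ and $p_0 = 1$, which is a basis of $C_S(P)$ as a free $K[P]$-module. In particular $|M| \leq n$, and every element of $C_S(P)$ can be written as $\sum_{i \in M} f_i p_i$ with each $f_i \in K[P]$.

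From this description the generating set almost writes itself. First I would observe that the subalgebra of $C_S(P)$ generated over $K$ by $P$ together with the elements $p_i$, $i \in M$, already contains every $K[P]$-linear combination of the $p_i$, since $K[P]$ is generated over $K$ by $P$; hence it contains all of $C_S(P)$, and of course it is contained in $C_S(P)$. So $\{P\} \cup \{p_i \mid i \in M\}$ generates $C_S(P)$ as a $K$-algebra. I would then trim this list to the required size: since $p_0 = 1$ is the identity of the algebra it is redundant as a generator and may be discarded. The remaining generators are $P$ together with the $p_i$ for $i \in M \setminus \{0\}$, which number at most $1 + (|M| - 1) = |M| \leq n$, giving the claimed bound.

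There is no real obstacle here; the only point requiring care is the bookkeeping that keeps the count at $n$ rather than $n+1$, namely the observation that the basis element $p_0 = 1$ need not be listed among the algebra generators. For completeness I would note that one can instead give a self-contained argument bypassing the module structure and using only Lemma \ref{onedim}: for each residue $i$ modulo $n$ realized by the degree of some element of $C_S(P)$, choose $Q_i \in C_S(P)$ of minimal degree with $\deg(Q_i) \equiv i \imod n$, set $Q_0 = P$, and induct on degree. Given $T \in C_S(P)$ of degree $m$, one picks the relevant $Q_j$ with $j \equiv m \imod n$, writes $m = \deg(Q_j) + kn$ using minimality, and applies Lemma \ref{onedim} to find $\alpha \in K$ with $\deg(T - \alpha P^k Q_j) < m$; the induction hypothesis then places $T$ in the subalgebra generated by $P, Q_1, \ldots, Q_{n-1}$.
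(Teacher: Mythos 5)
Your proposal is correct and matches the paper's own reasoning: the paper's proof is precisely ``Follows from Theorem \ref{thm_Centralizer} and its proof,'' and you have simply made explicit the bookkeeping (adjoin $P$, discard the basis element $p_0=1$, count $|M|\leq n$ generators) that this one-line proof leaves to the reader. Your alternative self-contained induction using Lemma \ref{onedim} is also essentially the argument the authors had in mind, so there is no divergence in approach.
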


\begin{proof}
Follows from Theorem \ref{thm_Centralizer} and its proof.
\end{proof}

In some cases we have been able to prove that the centralizer of an element is in fact generated by a single element, not just a finite number of them. To do so we have relied on the the equation stated in Lemma \ref{onedim}.

We begin with a lemma which we will use frequently. 

\begin{lemma} \label{lem_degDiv}

Let $P$ be a non-constant element of $S$ of degree $n$. Suppose all elements of $C_S(P)$ have degree divisible by $n$. Then 
\begin{displaymath}
C_S(P)=K[P] := \{ \,  \sum c_i P^i \mid c_i \in K \}.
\end{displaymath}
\end{lemma}

\begin{proof}
We know that $ K[P] \subseteq  C_S(P)$. We also know that all elements of degree zero in $C_S(P)$ lie in $K[P]$. We give a proof by induction. 

Suppose that elements in $C_S(P)$ of degree less than $k$ lie in $K[P]$. We want to show that all elements of degree $k$ in $C_S(P)$ also lie in $K[P]$. If $k$ is not divisible by $n$ this is vacuously true. So suppose $k=pn$ for some integer $p$ and let $Q$ be any element in $C_S(P)$ of degree $k$. The element $P^p$ lies in $C_S(P)$ and has degree $k$. By Lemma \ref{onedim} there is an $\alpha \in K$ such that $Q$ and $\alpha P^p$ have the same leading coefficient. Thus we have that $\deg(Q -\alpha P^p) < k$ which implies that $Q-\alpha P^p \in K[P]$, by the induction assumption. Hence it follows that $Q \in K[P]$.  
\end{proof}

Our first result showing a centralizer to be singly generated is in the case when our non-constant element has prime degree. 

\begin{prop}\label{prop_degPrime}

Let $P$ be an element of $S$ of degree $n$, where $n$ is a prime. Let $p_n$ be the leading coefficient of $P$ and let $\rho$ be the degree of $p_n$ as a polynomial in $y$. Let $s$ be the degree of $\sigma(y)$, also as a 
polynomial in $y$. 
Then if $\sum_{i=0}^{n-1} s^i$ does not divide $\rho$ it follows that $C_S(P)= \{ \sum c_i P^i \ | \ c_i \in K \}$.
\end{prop}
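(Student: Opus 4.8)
The plan is to show that under the stated hypothesis every nonzero element of $C_S(P)$ has degree divisible by $n$, so that Lemma \ref{lem_degDiv} immediately yields $C_S(P) = K[P]$. The whole argument thus reduces to controlling the possible $x$-degrees of centralizing elements, and for this the degree relation recorded inside the proof of Lemma \ref{onedim} is exactly the right tool.

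Concretely, I would take an arbitrary nonzero $Q \in C_S(P)$ of degree $m$, let $q_m$ be its leading coefficient, and set $k = \deg_y(q_m)$. Lemma \ref{onedim} gives $\rho + s^n k = k + s^m \rho$, which rearranges to $k(s^n - 1) = \rho(s^m - 1)$, that is, $k = \rho \frac{s^m - 1}{s^n - 1}$. Since $s \geq 2$ and $k$ is a non-negative integer, the content of the proposition is that this constraint forces $n \mid m$.

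Now I would argue by contradiction, supposing $n \nmid m$. Because $n$ is prime, this means $\gcd(m,n) = 1$. Invoking the standard identity $\gcd(s^m - 1, s^n - 1) = s^{\gcd(m,n)} - 1 = s - 1$, and writing $a = \frac{s^m-1}{s-1} = \sum_{i=0}^{m-1} s^i$ and $b = \frac{s^n-1}{s-1} = \sum_{i=0}^{n-1} s^i$, I would deduce $\gcd(a,b) = 1$, since $(s-1)\gcd(a,b) = \gcd((s-1)a,(s-1)b) = s-1$. The relation then reads $kb = \rho a$; as $\gcd(a,b) = 1$ and $k$ is an integer, $b$ must divide $\rho$. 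This contradicts the hypothesis that $\sum_{i=0}^{n-1} s^i = b$ does not divide $\rho$. (The degenerate case $\rho = 0$ never arises under the hypothesis, since $b$ divides $0$.)

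Hence no centralizing element can have degree coprime to $n$, so every element of $C_S(P)$ has degree a multiple of $n$, and Lemma \ref{lem_degDiv} completes the proof. I expect the main obstacle to be the number-theoretic step establishing $\gcd(a,b)=1$: this is precisely where primality of $n$ is genuinely used, since for composite $n$ one would only obtain $\gcd(s^m-1,s^n-1) = s^{\gcd(m,n)}-1$ with $\gcd(m,n)$ possibly a proper divisor of $n$, and the clean divisibility obstruction $\sum_{i=0}^{n-1} s^i \nmid \rho$ would no longer be enough to exclude degrees not divisible by $n$.
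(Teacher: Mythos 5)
Your proof is correct and follows essentially the same route as the paper: the same leading-coefficient equation yields $k=\rho\frac{s^m-1}{s^n-1}$, primality reduces everything to the case $\gcd(m,n)=1$, coprimality of the two geometric sums forces $\sum_{i=0}^{n-1}s^i \mid \rho$ (contradicting the hypothesis), and Lemma \ref{lem_degDiv} finishes. The only cosmetic difference is that you derive $\gcd\bigl(\sum_{i=0}^{m-1}s^i,\sum_{i=0}^{n-1}s^i\bigr)=1$ from the standard identity $\gcd(s^m-1,s^n-1)=s^{\gcd(m,n)}-1$, whereas the paper proves exactly this coprimality statement as its Lemma \ref{lem_geomSum} by a Euclidean-style induction.
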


We use the following lemma in our proof.

\begin{lemma}\label{lem_geomSum}
Let $m$ and $n$ be positive integers and suppose that $\gcd(n,m)=1$. Let $s$ be a positive integer. Then $\gcd(\sum_{i=0}^{n-1} s^i, \sum_{j=0}^{m-1} s^j )=1$. 
\end{lemma}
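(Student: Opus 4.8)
The plan is to rewrite both sums in closed form and reduce the claim to the classical identity $\gcd(s^a-1,s^b-1)=s^{\gcd(a,b)}-1$. Write $R_k := \sum_{i=0}^{k-1}s^i$ for the two quantities in question. When $s=1$ we simply have $R_n=n$ and $R_m=m$, so the statement is exactly the hypothesis $\gcd(n,m)=1$ and there is nothing to prove. I would therefore assume $s\geq 2$ from here on, in which case $R_k=\frac{s^k-1}{s-1}$.

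Next I would exploit the relation $(s-1)R_k=s^k-1$. Let $d=\gcd(R_n,R_m)$. Since $d$ divides both $R_n$ and $R_m$, the product $(s-1)d$ divides both $(s-1)R_n=s^n-1$ and $(s-1)R_m=s^m-1$, and hence divides $\gcd(s^n-1,s^m-1)$. By the classical identity recalled above, together with the hypothesis $\gcd(n,m)=1$, this greatest common divisor equals $s^{1}-1=s-1$. Thus $(s-1)d$ divides $s-1$, which forces $d=1$, as desired.

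The only real content is the identity $\gcd(s^n-1,s^m-1)=s^{\gcd(n,m)}-1$, which is standard; if one prefers a self-contained argument, one can instead run a Euclidean-type induction directly on the $R_k$. Concretely, for $n>m$ one has $R_n=R_m+s^m R_{n-m}$, and since $R_m\equiv 1\pmod s$ we have $\gcd(R_m,s)=1$, so $\gcd(R_n,R_m)=\gcd(R_{n-m},R_m)$. This mirrors the subtractive Euclidean algorithm on the pair $(n,m)$; because $\gcd(n,m)=1$ the recursion terminates at $R_1=1$, giving $\gcd(R_n,R_m)=1$. The main thing to watch in either route is the bookkeeping of the edge case $s=1$ and the base of the induction, which are the only places the argument could slip.
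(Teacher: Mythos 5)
Your proof is correct, but your primary route is genuinely different from the paper's. The paper proves the lemma by a subtractive-Euclid induction carried out directly on the sums: it writes
$\gcd\bigl(\sum_{i=0}^{k-1}s^i,\ \sum_{j=0}^{m-1}s^j\bigr)=\gcd\bigl(\sum_{i=0}^{k-1}s^i,\ s^k\sum_{j=0}^{m-1-k}s^j\bigr)=\gcd\bigl(\sum_{i=0}^{k-1}s^i,\ \sum_{j=0}^{m-1-k}s^j\bigr)$
and iterates, reducing $(n,m)$ to $(n,m-n)$ until the induction hypothesis applies --- which is exactly the ``self-contained'' alternative you sketch in your closing paragraph, including the key observation that each sum is $\equiv 1 \pmod{s}$ and hence coprime to $s$. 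Your main argument instead passes to the closed form $R_k=(s^k-1)/(s-1)$ and invokes the classical identity $\gcd(s^a-1,s^b-1)=s^{\gcd(a,b)}-1$: with $d=\gcd(R_n,R_m)$ you get $(s-1)d\mid\gcd(s^n-1,s^m-1)=s-1$, forcing $d=1$. With the $s=1$ edge case split off as you do, this is entirely correct and notably shorter; it also generalizes immediately to $\gcd(R_m,R_n)=R_{\gcd(m,n)}$, which is precisely the paper's Lemma \ref{lem_gcd}. What your route costs is self-containedness: the classical identity you quote is itself standardly proved by the very Euclidean manipulation the paper performs, so the paper's version keeps everything elementary at the price of some bookkeeping (its ``sooner or later we will reduce to a case where we can use the induction assumption'' step, which your subtractive recursion on $(n,m)$ makes precise).
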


\begin{proof} 
This is clearly (vacuously) true for $n=1$ and it is a simple exercise to prove it is true for $n=2$. We use induction on $n$ to prove the lemma in general. So suppose it is true if $n<k$ and we want to show it is true for $n=k$. 
So let $m>k$ be such that $\gcd(k,m)=1$.

\begin{multline*}
\gcd(\sum_{i=0}^{k-1} s^i, \sum_{j=0}^{m-1} s^j ) = \gcd\left(\sum_{i=0}^{k-1} s^i, \sum_{j=0}^{k-1} s^j + \sum_{j=k}^{m-1}s^j\right) = \gcd\left( \sum_{i=0}^{k-1} s^i, s^k \sum_{j=0}^{m-1-k} s^j\right) = \\
= \gcd\left( \sum_{i=0}^{k-1} s^i, \sum_{j=0}^{m-1-k} s^j\right).
\end{multline*}
 Now it is clearly true that $\gcd(k, m-k)=1$. If $m-k<k$ we can use the induction assumption. If  $m-k>k$ set $m'= m-k$ and repeat the previous calculation. Sooner or later we will reduce to a case where we can use the induction 
assumption. 
 
\end{proof}

\begin{proof}[Proof of proposition]
Let $Q$ be an element of $S$ that commutes with $P$. Let $Q$ have degree $m$ and suppose that $\gcd(m,n)=1$. Let $q_m$ be the leading coefficient of $Q$. Equating the leading coefficients in $PQ$ and $QP$ we
find that 
\begin{displaymath}
 p_n \sigma^n(q_m) = q_m \sigma^m(p_n). 
\end{displaymath}
 
If $k$ denotes the degree of $q_m$, we find that 
\begin{displaymath}
 k= \rho \frac{s^m-1}{s^n-1} = \rho \frac{\sum_{i=0}^{m-1}s^i}{\sum_{i=0}^{n-1} s^i}.
\end{displaymath}
Now it would follow from the lemma that $k$ is a non-integer which is impossible. 

Thus $\gcd(m,n)=n$, since $n$ is prime, and the result follows by Lemma \ref{lem_degDiv}.

\end{proof}

We can generalize Lemma \ref{lem_geomSum} to the following lemma.

\begin{lemma}\label{lem_gcd}
 Let $m$ and $n$ be positive integers. Let $s$ be a positive integer greater than $1$. Set $r=\gcd(m,n)$. Then 
\begin{displaymath}
\gcd\left(\sum_{i=0}^{m-1} s^i, \sum_{j=0}^{n-1} s^j\right) = \sum_{i=0}^{r-1} s^i. 
\end{displaymath}

\begin{proof} 
We can write
\begin{align*}
\sum_{i=0}^{m-1} s^i = \sum_{k=0}^{\frac{m}{r}-1} s^{rk} \sum_{i=0}^{r-1} s^i = \left( \sum_{k=0}^{\frac{m}{r}-1} s^{rk} \right) \left( \sum_{i=0}^{r-1} s^i \right) 
\end{align*}
 
and 
\begin{align*}
\sum_{i=0}^{n-1} s^i = \sum_{k=0}^{\frac{n}{r}-1} s^{rk} \sum_{i=0}^{r-1} s^i = \left( \sum_{k=0}^{\frac{n}{r}-1} s^{rk}\right) \left( \sum_{i=0}^{r-1} s^i \right) .
\end{align*}

Since $\gcd\left(\sum_{k=0}^{\frac{m}{r}-1} s^{rk}, \sum_{k=0}^{\frac{n}{r}-1} s^{rk}\right)=1$ by Lemma \ref{lem_geomSum} we are done. 
\end{proof}

\end{lemma}

We use this lemma in the next proposition. 

\begin{prop}\label{prop_yLeqx}
 Let $P$ be an element of $S$ of degree $n>0$ in $x$ and suppose that $p_n$ (the leading coefficient of $P$) has degree greater than zero but not greater than $n$ as a polynomial in $y$. Then $C_S(P)=K[P]$. 
\end{prop}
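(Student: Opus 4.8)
The plan is to invoke Lemma \ref{lem_degDiv}, which reduces the claim to showing that every nonzero element of $C_S(P)$ has degree divisible by $n$. So let $Q\in C_S(P)$ be nonzero of degree $m$, with leading coefficient $q_m$ of $y$-degree $k$. Writing $s=\deg_y(\sigma(y))\geq 2$ and comparing $y$-degrees on the two sides of the identity $p_n\sigma^n(q_m)=q_m\sigma^m(p_n)$ from Lemma \ref{onedim}, I obtain $\rho+s^nk=k+s^m\rho$, hence $k(s^n-1)=\rho(s^m-1)$ and
$$k=\rho\,\frac{s^m-1}{s^n-1}=\rho\,\frac{\sum_{i=0}^{m-1}s^i}{\sum_{j=0}^{n-1}s^j}.$$
The aim is to show that integrality of $k$ forces $n\mid m$.

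Set $r=\gcd(m,n)$. By Lemma \ref{lem_gcd}, $\gcd\bigl(\sum_{i=0}^{m-1}s^i,\sum_{j=0}^{n-1}s^j\bigr)=\sum_{i=0}^{r-1}s^i$, so after cancelling this common factor the displayed fraction is in lowest terms with denominator $\tfrac{s^n-1}{s^r-1}$. Since $k$ is a nonnegative integer and the reduced numerator is coprime to this denominator, the denominator must divide $\rho$; in particular $\tfrac{s^n-1}{s^r-1}\leq\rho$. This is exactly where the hypothesis $0<\rho\leq n$ enters: it guarantees $\rho\geq 1$ (so that divisibility yields an inequality) and bounds $\rho$ by $n$, giving $\tfrac{s^n-1}{s^r-1}\leq n$.

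It remains to rule out $r<n$. Assuming $r$ is a proper divisor of $n$, I would expand $\tfrac{s^n-1}{s^r-1}=1+s^r+s^{2r}+\cdots+s^{n-r}$ and bound it below by its top term: $\tfrac{s^n-1}{s^r-1}>s^{n-r}\geq 2^{n-r}$. Because $r\mid n$ and $r<n$ force $r\leq n/2$, we have $n-r\geq\lceil n/2\rceil$, and an elementary check gives $2^{\lceil n/2\rceil}\geq n$ for all $n\geq 1$. Hence $\tfrac{s^n-1}{s^r-1}>n$, contradicting the inequality of the previous paragraph. Therefore $r=n$, i.e. $n\mid m$, and Lemma \ref{lem_degDiv} then yields $C_S(P)=K[P]$.

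The degree comparison and the geometric-series manipulations are routine; the substantive ingredient is Lemma \ref{lem_gcd}, which identifies the reduced denominator $\tfrac{s^n-1}{s^r-1}$, together with the hypothesis $\rho\leq n$ that bounds it. The one point that I expect to require a little care, and which I regard as the main (still elementary) obstacle, is the uniform size estimate $\tfrac{s^n-1}{s^r-1}>n$ for every proper divisor $r$ of $n$ — in particular verifying the small values of $n$ where $2^{n/2}$ is close to $n$, which is why the estimate is phrased through the top term $s^{n-r}$ and the ceiling $\lceil n/2\rceil$ rather than through $2^{n/2}$ directly.
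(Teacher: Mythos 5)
Your proof is correct and follows essentially the same route as the paper's: reduce to divisibility of degrees via Lemma \ref{lem_degDiv}, extract $k = \rho\,\frac{s^m-1}{s^n-1}$ from the leading-coefficient equation, cancel the common factor identified by Lemma \ref{lem_gcd} so that the reduced denominator $\frac{s^n-1}{s^r-1}$ must divide $\rho$, and contradict this with a size estimate using $\rho \leq n$. The only difference is cosmetic: the paper dispatches $n \leq 3$ separately (via Corollary \ref{cor_FinGen} and Proposition \ref{prop_degPrime}) and then uses $2^{n/2} \geq n$ for $n \geq 4$, whereas your slightly sharper bound $2^{\lceil n/2 \rceil} \geq n$ handles all $n$ uniformly.
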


\begin{proof}
 When $n=1$ this is true by Corollary \ref{cor_FinGen}. When $n=2$ or $n=3$ this is true by Proposition \ref{prop_degPrime}. So suppose that $n \geq 4$. 

It will be enough to prove that the degrees of all elements of $C_S(P)$ are divisible by $n$ by Lemma \ref{lem_degDiv}.

Let $Q$ be an element of $C_S(P)$. Suppose that $Q$ has degree $m$. Let $q_m$ be the leading coefficient of $Q$. By comparing the leading coefficient of $PQ$ and $QP$ we get the equation
\begin{displaymath}
  p_n \sigma^n(q_m) = q_m \sigma^m(p_n).
\end{displaymath}

Let $k$ denote the degree of $q_m$ and $\rho$ the degree of $p_n$. (Both degrees are measured as polynomials in $y$.) We get the following equation for $k$. 
\begin{displaymath}
 k = \rho \frac{\sum_{i=0}^{m-1}s^i}{\sum_{i=0}^{n-1}s^i}.
\end{displaymath}

Set $r=\gcd(m,n)$. What we want to prove is that $r=n$. So suppose that it does not equal $n$. Then $r\leq \frac{n}{2}$. Write $n=r n'$. Then 
\begin{displaymath}
 \sum_{i=0}^{n-1} s^i = \left( \sum_{i=0}^{r-1} s^i \right) \left( \sum_{i=0}^{n'-1} s^{ri} \right).
\end{displaymath}

From Lemma \ref{lem_gcd} we conclude that $\sum_{i=0}^{n'-1} s^{ri}$ must divide $\rho$ if $k$ is to be an integer. However, 
\begin{displaymath}
 \sum_{i=0}^{n'-1} s^{ri} > s^{r(n'-1)} \geq 2^{r(n'-1)} = \frac{2^n}{2^r}.
\end{displaymath}
  
Since $r \leq \frac{n}{2}$ we find that
\begin{displaymath}
 \frac{2^n}{2^r} \geq 2^{\frac n 2}.
\end{displaymath}

Since $2^{\frac n 2} \geq n$ for all $n \geq 4$ we find, to summarize our calculations, that 
\begin{displaymath}
 \sum_{i=0}^{n'-1} s^{ri} > 2^{\frac n 2} \geq n \geq \rho.
\end{displaymath}

But this is a contradiction to the fact that the sum had to divide $\rho$. 

\end{proof}

\begin{cor}
 Let $n$ be any positive integer. Then $C_S(y^n x^n) = K[y^n x^n]$. 
\end{cor}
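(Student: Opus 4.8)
The plan is to recognize the corollary as an immediate special case of Proposition \ref{prop_yLeqx}, so the work reduces to checking that the hypotheses of that proposition are satisfied by $P = y^n x^n$. First I would note that $y^n x^n$ is already written in the standard normal form $\sum_i r_i x^i$ with coefficients in $R = K[y]$ placed on the left of the powers of $x$; it is a single monomial, so no lower-order terms can be hidden in it. Consequently its degree in $x$ is exactly $n > 0$, and its leading coefficient is
\begin{displaymath}
 p_n = y^n.
\end{displaymath}

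Next I would compute the degree of this leading coefficient as a polynomial in $y$, which is $\rho = \deg_y(y^n) = n$. Proposition \ref{prop_yLeqx} requires that $p_n$ have $y$-degree strictly greater than zero but not greater than the $x$-degree $n$, that is $0 < \rho \leq n$. Here $\rho = n$, so both inequalities hold, the upper one with equality. Since this boundary case $\rho = n$ is explicitly included in the hypothesis of Proposition \ref{prop_yLeqx}, the proposition applies directly and gives $C_S(y^n x^n) = K[y^n x^n]$, which is precisely the claim.

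I do not anticipate any real obstacle here, since the corollary is designed to illustrate Proposition \ref{prop_yLeqx}. The only points requiring a moment's care are confirming that $y^n$ is genuinely the leading coefficient (immediate, as $y^n x^n$ is a single term in canonical form and the endomorphism $\sigma$ and $\sigma$-derivation $\delta$ play no role in reading off the leading coefficient of an element already written with coefficients on the left), and noting that the equality case $\rho = n$ falls under the condition \emph{not greater than $n$} rather than being excluded. Both are clear, so the proof is a one-line appeal to the preceding proposition.
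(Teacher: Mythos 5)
Your proof is correct and matches the paper's intent exactly: the corollary is stated without proof precisely because it is the immediate special case of Proposition \ref{prop_yLeqx} you describe, with $P=y^nx^n$ already in normal form, leading coefficient $p_n=y^n$, and $\rho=n$ satisfying $0<\rho\leq n$ (equality being allowed). Your attention to the boundary case and to the fact that no rewriting via $\sigma$ or $\delta$ is needed is exactly the right level of care.
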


\begin{prop}
 Let $n$ be any positive integer. Then $C_S(x^n y^n) = K[x^n y^n]$. 
\end{prop}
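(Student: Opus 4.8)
The plan is to follow the template of Proposition \ref{prop_yLeqx}: reduce to showing that every element of $C_S(P)$, with $P = x^n y^n$, has degree divisible by $n$, and then conclude by Lemma \ref{lem_degDiv}. First I would compute the leading coefficient $p_n$ of $P$ as a polynomial in $x$, together with its $y$-degree $\rho$. Bringing $x^n y^n$ into the normal form $\sum a_i x^i$ by repeated use of $xa = \sigma(a)x + \delta(a)$, the top-degree term arises from always taking the $\sigma(a)x$ summand, so $p_n = \sigma^n(y^n) = \sigma^n(y)^n$; writing $s = \deg_y(\sigma(y))$ this gives $\rho = \deg_y(p_n) = n s^n$. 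The case $n = 1$ is immediate from Lemma \ref{lem_degDiv}, and for the primes $n = 2, 3$ I would invoke Proposition \ref{prop_degPrime}, checking that $\sum_{i=0}^{n-1} s^i$, being $\equiv 1 \pmod s$ and larger than $n$, is coprime to $s^n$ and hence does not divide $\rho = n s^n$. Thus I may assume $n \ge 4$.

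For the main case, let $Q \in C_S(P)$ have degree $m$ with leading coefficient of $y$-degree $k$. Lemma \ref{onedim} gives
\begin{displaymath}
k = \rho \, \frac{\sum_{i=0}^{m-1} s^i}{\sum_{i=0}^{n-1} s^i},
\end{displaymath}
and the goal is $n \mid m$. Setting $r = \gcd(m,n)$, $n = rn'$, $D = \sum_{i=0}^{n-1}s^i$, $N = \sum_{i=0}^{m-1}s^i$, $G = \sum_{i=0}^{r-1}s^i$ and $E = \sum_{i=0}^{n'-1}s^{ri}$, I would use the factorization $D = GE$ together with Lemma \ref{lem_gcd}, which gives $\gcd(N,D)=G$ and hence $\gcd(N/G, E) = 1$. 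Substituting $N = G(N/G)$ into the formula for $k$ collapses the denominator to $E$, so integrality of $k$ forces $E \mid \rho N/G$ and, by coprimality, $E \mid \rho = n s^n$.

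The step I expect to be the crux — and the genuine difference from Proposition \ref{prop_yLeqx}, where the leading coefficient had $y$-degree at most $n$ — is eliminating the oversized factor $s^n$. Here the observation is that $E = 1 + s^r + \cdots + s^{r(n'-1)} \equiv 1 \pmod s$, so $\gcd(E, s^n) = 1$ and the divisibility $E \mid n s^n$ reduces to $E \mid n$. Assuming $r \ne n$ gives $r \le n/2$ and $n' \ge 2$, whence (using $s \ge 2$)
\begin{displaymath}
E > s^{\,n-r} \ge s^{\,n/2} \ge 2^{\,n/2} \ge n \qquad (n \ge 4),
\end{displaymath}
contradicting $E \mid n$. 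Therefore $r = n$, i.e. $n \mid m$; as $Q$ was arbitrary, Lemma \ref{lem_degDiv} yields $C_S(P) = K[P] = K[x^n y^n]$.

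Apart from this coprimality trick, I expect every step to be a routine transcription of the arithmetic already carried out for Proposition \ref{prop_yLeqx}, the only conceptual novelty being that the leading coefficient $\sigma^n(y)^n$ of $x^n y^n$ now has large $y$-degree $n s^n$ rather than the small degree available in the earlier statement.
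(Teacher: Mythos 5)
Your proposal is correct and takes essentially the same route as the paper's own proof: the same computation of the leading coefficient $\sigma^n(y^n)$ with $y$-degree $\rho = n s^n$, the same case split ($n=1$ handled trivially, $n=2,3$ via Proposition \ref{prop_degPrime}, and $n\geq 4$ via Lemma \ref{lem_gcd}), and the same coprimality argument reducing $E \mid n s^n$ to $E \mid n$, contradicted by $E > 2^{n/2} \geq n$. The only differences are expository: you spell out the verification of the hypothesis of Proposition \ref{prop_degPrime} for $n=2,3$ and the fact that $\gcd(E, s^n)=1$, both of which the paper asserts more tersely.
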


\begin{proof}
 Set $P = x^n y^n$. $P$ has degree $n$ as an element of $S$ and its leading coefficient is $\sigma^n(y^n)$. The degree of the leading coefficient as a polynomial in $y$ is $n s^n$. 

The proposition is true when $n=1$ by Corollary \ref{cor_FinGen}. It is true when $n=2$ and when $n=3$ by Proposition \ref{prop_degPrime}. 

So suppose that $n \geq 4$. Let $Q$  be an element of degree $m$. As before it suffices to prove that $\gcd(m,n)=n$. We will use a proof by contradiction, so set $r=\gcd(m,n)$ and suppose that $r<n$.  Letting $k$ denote 
the degree in $y$ of the leading coefficient of $Q$ we get, 
as before,
\begin{displaymath}
 k = n s^n \frac{\sum_{i=0}^{m-1}s^i}{\sum_{i=0}^{n-1}s^i}.
\end{displaymath}
 
We cancel common factors in the fraction, and by Lemma \ref{lem_gcd} we get
\begin{displaymath}
 k = n s^n \frac{A}{\sum_{i=0}^{n'-1} s^{ri}},
\end{displaymath}
where $n' = \frac n r$.  Since $\gcd(A s^n, \sum_{i=0}^{n'-1} s^{ri}) = 1$ we see that we must have that $\sum_{i=0}^{n'-1} s^{ri}|n$. But, as in the proof of Proposition \ref{prop_yLeqx}, this is not the case. 

\end{proof}

For the next proposition we consider only special $\sigma$. 

\begin{prop}\label{prop_SpecialSigma}
Let $R=K[y]$ and suppose that $\sigma(y)=y^k$ for some positive integer $k>1$. Let $P$ be an element of $S=\OreGen$ of degree $n$ and let $p_n$ be its leading coefficient. Suppose that $p_n$ has the following property: 
there does not exist an $a \in \bar{K}$ and distinct positive integers $i,j$, such that $a^i$ and $a^j$ both are roots of $p_n$. (Here $\bar{K}$ is the algebraic closure of $K$.) Then $C_S(P)=K[P]$. 

\end{prop}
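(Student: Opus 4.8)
The plan is to show that every element of $C_S(P)$ has degree divisible by $n$ and then invoke Lemma \ref{lem_degDiv}. (Throughout I assume $\deg_y(p_n)\ge 1$; the statement needs this reading, since if $p_n$ is a nonzero constant it can fail, e.g. $x\in C_S(x^2)\setminus K[x^2]$.) So let $Q\in C_S(P)$ have degree $m$ and leading coefficient $q_m$. Lemma \ref{onedim} gives $p_n\sigma^n(q_m)=q_m\sigma^m(p_n)$, and since $\sigma(y)=y^k$ acts by $\sigma^j(f)(y)=f(y^{k^j})$, this is the polynomial identity
\[
p_n(y)\,q_m\!\left(y^{k^n}\right)=q_m(y)\,p_n\!\left(y^{k^m}\right).
\]
I would pass to $\bar K$ and compare the two sides as multisets of roots in $\bar K^{\times}$.

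First I would record what the hypothesis buys us. Writing $a(\gamma)$ for the multiplicity of $\gamma$ as a root of $p_n$, the condition forbids $0$ and every root of unity from being a root (take $a=0$; resp. $a=\alpha$, $i=1$, $j=1+\operatorname{ord}(\alpha)$), so every root of $p_n$ lies in $\bar K^{\times}$ and has infinite multiplicative order. The decisive consequence is the following uniqueness: for a fixed $\delta\in\bar K^{\times}$ there is at most one exponent $e\ge 0$ with $\delta^{k^e}$ a root of $p_n$ — for if $\delta^{k^{e_1}}$ and $\delta^{k^{e_2}}$ were both roots with $e_1\ne e_2$, then $a=\delta$, $i=k^{e_1}$, $j=k^{e_2}$ would violate the hypothesis.

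Next I would turn the identity into a recursion for the multiplicity function $b(\gamma)$ of $q_m$. Comparing multiplicities at a point $\gamma$ gives, in characteristic prime to $k$, the relation $a(\gamma)-a(\gamma^{k^m})=b(\gamma)-b(\gamma^{k^n})$ for every $\gamma\in\bar K^{\times}$. Since $q_m$ has finitely many roots and a non-root-of-unity $\delta$ has an infinite, injective forward orbit under $\gamma\mapsto\gamma^{k}$, this telescopes to
\[
b(\delta)=\sum_{t\ge 0}\Bigl(a\!\left(\delta^{k^{nt}}\right)-a\!\left(\delta^{k^{nt+m}}\right)\Bigr).
\]
To finish, suppose $n\nmid m$, pick a root $\alpha$ of $p_n$ and a $k^m$-th root $\delta$ of $\alpha$. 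Then $\delta$ is not a root of unity, and by the uniqueness above $e^{\ast}=m$ is the only exponent with $\delta^{k^{e^{\ast}}}$ a root of $p_n$. Hence in the sum the terms $a(\delta^{k^{nt}})$ all vanish (no $nt$ equals $m$, as $n\nmid m$), while the terms $a(\delta^{k^{nt+m}})$ contribute exactly once, at $t=0$, giving $b(\delta)=-a(\alpha)<0$, which is impossible for a multiplicity. Therefore $n\mid m$, and Lemma \ref{lem_degDiv} completes the proof.

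The step I expect to be most delicate is the multiplicity bookkeeping in characteristic $p\mid k$. Writing $k=p^{a}k_0$ with $p\nmid k_0$, one has $y^{k^n}-\beta=(y^{k_0^{\,n}}-\mu)^{p^{an}}$, so the recursion acquires positive inseparability factors $p^{an}$ and $p^{am}$. I would verify that these factors merely rescale the two groups of terms by positive constants, so the sign of $b(\delta)$ in the final step is unaffected and the contradiction persists; in characteristic $0$ or prime to $k$ these factors are $1$ and the argument is exactly as above.
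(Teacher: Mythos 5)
Your proof is correct, but it takes a genuinely different route from the paper's. The paper runs an infinite-descent divisibility argument: from $p_n(y)\,q_m(y^{k^n})=q_m(y)\,p_n(y^{k^m})$ and the root hypothesis it gets $\gcd\bigl(p_n(y),p_n(y^{k^m})\bigr)=1$, hence $p_n(y)\mid q_m(y)$; writing $q_m=p_n\hat q$ and iterating shows $q_m$ is divisible by the pairwise coprime polynomials $p_n(y),p_n(y^{k^n}),p_n(y^{k^{2n}}),\ldots$, whose $y$-degrees are strictly increasing, contradicting finiteness of $\deg_y q_m$. You instead fix one well-chosen point, a $k^m$-th root $\delta$ of a root $\alpha$ of $p_n$, track multiplicities along its forward orbit under $\gamma\mapsto\gamma^k$, and telescope to the sign contradiction $b(\delta)=-a(\alpha)<0$. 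Both proofs hinge on exactly the same consequence of the hypothesis (at most one exponent $e$ with $\delta^{k^e}$ a root of $p_n$, equivalently pairwise coprimality of the polynomials $p_n(y^{k^e})$), and both finish with Lemma \ref{lem_degDiv}; what differs is the bookkeeping and the final contradiction (unbounded degree versus negative multiplicity). The trade-off: the paper's gcd manipulations are characteristic-free, whereas your multiplicity count needs the inseparability patch you sketch when $\operatorname{char}K=p$ divides $k$. That patch does work: writing $k=p^ak_0$, the relation becomes $b(\gamma)-p^{an}b(\gamma^{k^n})=a(\gamma)-p^{am}a(\gamma^{k^m})$, and telescoping with the positive weights $p^{ant}$ gives $b(\delta)=-p^{am}a(\alpha)<0$, so the contradiction survives. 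In exchange, your argument is more quantitative (it determines every root multiplicity of $q_m$ from those of $p_n$) and reaches the contradiction in one step rather than by descent.

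Your opening caveat is also a genuine catch rather than pedantry: if $p_n\in K^{\times}$ the hypothesis is vacuously true but the conclusion fails ($x\in C_S(x^2)\setminus K[x^2]$), and the paper's own proof silently breaks there as well, since the polynomials $p_n(y^{k^{jn}})$ are then constants and their degrees do not increase. So the proposition must indeed be read with the implicit assumption $\deg_y(p_n)\geq 1$.
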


\begin{proof}
 Let $Q$ be an element of $C_S(P)$. As before it suffices to prove that $\deg(Q)$ is divisible by $n$. So suppose $m=\deg(Q)$ is not. Let $q_m$ be the leading coefficient of $Q$. We get the following equation
\begin{displaymath}
 p_n \sigma^n(q_m) = q_m \sigma^m(p_n).
\end{displaymath}

Due to the special form of $\sigma$ this can be written 
\begin{displaymath}
 p_n(y) q_m(y^{k^n}) = q_m(y) p_n(y^{k^m}).
\end{displaymath}
Consider $\gcd(p_n(y), p_n(y^{k^m}))$. If this equals a nonzero polynomial $h$, then $h$ has a root, $a$, in $\bar{K}$. But then both $a$ and $a^{k^m}$ would be roots of $p_n$, contradicting the assumption we made. Thus 
$\gcd(p_n(y), p_n(y^{k^m}))=1$. So $p_n(y)$ must divide $q_m(y)$. Set $q_m(y) = p_n(y) \hat{q}(y)$ and simplify.  

The simplified equation becomes
\begin{displaymath}
 p_n(y^{k^n}) \hat{q}(y^{k^n}) = \hat{q}(y) p_n(y^{k^m}).
\end{displaymath}
Now we have that $\gcd(p_n(y^{k^n}),p_n(y^{k^m}))=1$. Thus $\hat{q}=q'(y) p_n(y^{k^n})$ for some $q'$. Inserting this into our equation and simplifying we get 
\begin{displaymath}
 p_n(y^{k^{2 n}}) q'(y^{k^n}) = q'(y) p_n(y^{k^m}).
\end{displaymath}
Since $n$ does not divide $m$ we must have that $2n \neq  m$. Thus 
\begin{displaymath}
\gcd(p_n(y^{k^{2 n}}), p_n(y^{k^{m}})) =1.
\end{displaymath}
 We trust that the pattern is obvious now. It is clear that we can continue this process 
for ever and conclude that $q_m(y)$ is divisible by an infinite sequence of polynomials with strictly increasing degrees. Thus our assumption that $m$ was not divisible by $n$ leads to a contradiction. 

\end{proof}

Specialising the definition of $S$ even further we get the following proposition.

\begin{prop}
\label{MonomCentralizer}
Let $\sigma(y)=y^s$ and $\delta(y)=0$. Set $P =y^i x^j$, where $i+j>0$. Then $C_S(P)$ is singly generated.

\end{prop}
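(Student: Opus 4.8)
The plan is to exploit the fact that, with $\delta=0$ and $P$ a monomial, $S$ is graded by degree in $x$ (placing $y$ in degree $0$) and $P=y^ix^j$ is homogeneous, so $C_S(P)$ is spanned by homogeneous elements. First I would dispose of the case $j=0$: then $P=y^i\in R\setminus K$, and as observed in the proof of Theorem \ref{thm_cenRcom} we have $C_S(P)=R=K[y]$, which is singly generated. So assume $j>0$ and write an arbitrary $Q\in C_S(P)$ as $Q=\sum_a q_a(y)x^a$. Using the relation $x^c y^b = y^{bs^c}x^c$ and equating the coefficient of $x^{a+j}$ in $PQ$ and $QP$ gives, for each $a$, the equation $y^i\,q_a(y^{s^j})=y^{is^a}q_a(y)$.

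Next I would show that each nonzero $q_a$ is a single monomial. If $\alpha$ and $\nu$ denote the top and bottom $y$-degrees of $q_a$, then comparing highest-degree terms on both sides of this equation yields $i+s^j\alpha=is^a+\alpha$, and comparing lowest-degree terms yields $i+s^j\nu=is^a+\nu$; subtracting gives $(s^j-1)(\alpha-\nu)=0$, so $\alpha=\nu$ (as $s^j>1$) and $q_a=c_a y^{b_a}$ with $b_a=i\frac{s^a-1}{s^j-1}$. Thus $Q$ is a $K$-linear combination of the monomials $M_a:=y^{b_a}x^a$ over those $a$ for which $b_a$ is a nonnegative integer, and conversely each such $M_a$ commutes with $P$. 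Setting $A=\{a\ge 0:\ (s^j-1)\mid i(s^a-1)\}$, a direct computation shows $M_aM_{a'}=M_{a+a'}$, so $A$ is a submonoid of $(\N,+)$, the $M_a$ with $a\in A$ satisfy $M_{a} = M_{e}^{\,a/e}$ once $A$ is understood, and $C_S(P)=\bigoplus_{a\in A}K\,M_a$ is the monoid algebra $K[A]$.

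The crux — and the step I expect to be the main obstacle — is to prove that the numerical monoid $A$ is in fact cyclic, i.e. $A=e\N$ for a single $e$; this is what makes $C_S(P)$ genuinely singly generated. Setting $g=\gcd(i,s^j-1)$ and $D'=(s^j-1)/g$, the defining divisibility condition simplifies (since $\gcd(D',i/g)=1$) to $s^a\equiv 1 \pmod{D'}$. Because $D'\mid s^j-1$ we have $\gcd(s,D')=1$, so $s$ is a unit modulo $D'$ and the set of solutions $a$ is exactly the set of multiples of $e:=\operatorname{ord}_{D'}(s)$ (with $e=1$ when $D'=1$, which covers $i=0$). Hence $A=e\N$, and since $M_{et}=M_e^{\,t}$ we conclude $C_S(P)=K[M_e]$, singly generated by $M_e=y^{b_e}x^e$. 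I would add the remark that $e$ may be a proper divisor of $j$ — for instance taking $i=s^j-1$ gives $D'=1$, $e=1$ and $C_S(y^{s^j-1}x^j)=K[y^{s-1}x]$ with $P=(y^{s-1}x)^j$ — which explains why the statement asserts single generation rather than the stronger $C_S(P)=K[P]$, and also why Proposition \ref{prop_SpecialSigma} cannot be invoked here, its hypothesis failing because $0$ is a root of the leading coefficient $y^i$.
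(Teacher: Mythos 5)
Your proof is correct, and its overall skeleton matches the paper's: decompose $Q$ into $x$-homogeneous components (valid since $\delta=0$ makes $S$ graded and $P$ homogeneous), show each component is a single monomial $y^{b_a}x^a$, derive the exponent condition $b_a=i(s^a-1)/(s^j-1)$, and prove that the set $A$ of admissible $x$-degrees is generated by one element. Where you differ is in how the two substantive steps are executed. For the reduction to monomials, the paper argues by ``induction downwards on the degree in $y$'' using that products of monomials are monomials; you instead compare top and bottom $y$-degrees in $y^i q_a(y^{s^j})=y^{is^a}q_a(y)$ to get $(s^j-1)(\alpha-\nu)=0$ in one stroke — a cleaner, more explicit rendering of the same idea. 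The genuinely different ingredient is the cyclicity of $A$: the paper takes the least positive $k_0\in A$ and runs a Euclidean descent ($k_1\in A$ forces $k_1-k_0\in A$, via $\gcd(s^{k_0},\sum_{m=0}^{j-1}s^m)=1$, so $k_1=2k_0$, etc.), whereas you convert the divisibility $(s^j-1)\mid i(s^a-1)$ into the congruence $s^a\equiv 1\pmod{D'}$ with $D'=(s^j-1)/\gcd(i,s^j-1)$ and quote the standard fact that its solutions are exactly the multiples of $e=\operatorname{ord}_{D'}(s)$. Your route buys three things the paper's does not make explicit: a closed-form number-theoretic description of the generator's degree $e$ (the paper only characterizes it as the least admissible $k$), a transparent monoid-algebra structure $C_S(P)=\bigoplus_{a\in e\N}K M_a\cong K[t]$ resting on the verified identity $M_aM_{a'}=M_{a+a'}$ (which the paper's concluding ``the result is now clear'' tacitly uses), and the instructive example $C_S(y^{s^j-1}x^j)=K[y^{s-1}x]$ with $P=(y^{s-1}x)^j$, showing why the proposition claims single generation rather than $C_S(P)=K[P]$ and why Proposition \ref{prop_SpecialSigma} cannot apply here.
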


\begin{proof}

The result is clear when $j=0, i>0$ so suppose that $j>0$. 

Suppose that $Q$ belongs to $C_S(P)$. Write $Q= \sum a_{l,k} y^l x^k$. We can compute that 
$$ y^{i}x^{j}y^l x^k = y^{i+l s^j} x^{j+k}.$$

Since $C_S(P)$ is graded by the powers of $x$ it follows that $\sum_l a_{l,k} y^l x^k \in C_S(P)$ for every $k$. Since the 
product of monomials is a new monomial it follows, by induction downwards on the degree in $y$, that every term $a_{l,k} y^l x^k$ must commute with $P$. 

Suppose $a_{l,k} \neq 0$. Then we must have that $$ y^i x^j y^l x^k = y^l x^k y^i x^j,$$ which implies that 
$i+l\cdot s^j = l+i\cdot s^k$. This means that 
$$ l = \frac{s^k-1}{s^j-1} i.$$ We can write this as 
$$ l = \frac{\sum_{m=0}^{k-1} s^m}{\sum_{m=0}^{j-1} s^m} i.$$

For every choice of $i,j,s,k$ this determines $l$. However the formula might give non-integer values for $l$, which does not correspond to an element of $S$. Let $k_0$ be the least non-negative integer for which the RHS is an integer when we substitute $k_0$ for $k$. 

Let $k_1$ be the next least non-negative integer such that the RHS of the formula is an integer.  We compute
$$ \frac{\sum_{m=0}^{k_1-1} s^m}{\sum_{m=0}^{j-1} s^m} i = \frac{\sum_{m=0}^{k_0-1} s^m+ s^{k_0}\sum_{m=0}^{k_1-k_0-1}s^m} {\sum_{m=0}^{j-1} s^m} i. $$ 

We see that (by the definition of $k_0$ and since $\gcd(s^{k_0},\sum_{m=0}^{j-1} s^m)=1$) that 
$$ \frac{\sum_{m=0}^{k_1-k_0-1} s^m}{\sum_{m=0}^{j-1} s^m} i$$
is an integer. This implies that $k_1 = 2k_0$, by the definition of $k_0$. Similarly, all $k$ that give an integer value for $l$ must be multiples of $k_0$. The result is now clear. 
\end{proof}

Note that the proof of Proposition \ref{MonomCentralizer} establishes that the generator of $C_S(P)$ is $y^l x^k$ where $k$ is the least non-negative integer such that 
$$ \frac{\sum_{m=0}^{k-1} s^m}{\sum_{m=0}^{j-1} s^m} i$$ 
is an integer and $l$ is the value of that integer. 

\section*{Acknowledgement} 

This research was supported in part by The Swedish Foundation for International Cooperation in Research and Higher Education
(STINT), The Swedish Research Council, The Swedish Royal Academy of Sciences and The Crafoord Foundation.

The authors also wish to thank an anonymous referee for comments that improved the paper.

\end{document}